\documentclass[portuges,12pt,letter]{article}
\usepackage[centertags]{amsmath}
\usepackage{amsfonts}
\usepackage{newlfont}
\usepackage{amscd}
\usepackage{graphics}
\usepackage{epsfig}
\usepackage{indentfirst}
\usepackage{amsxtra}
\usepackage[latin1]{inputenc}
\usepackage{amssymb, amsmath}
\usepackage{amsthm}
\usepackage[mathscr]{eucal}

\newtheorem{thm}{Theorem}[section]

\newtheorem{remark}[thm]{Remark}

\setlength{\textwidth}{18cm} \setlength{\textheight}{22cm}
\setlength{\topmargin}{-2cm} \setlength{\oddsidemargin}{-1cm}
\author{Fabio Silva Botelho \\ Department of Mathematics \\ Federal University of  Santa Catarina, UFSC \\
Florian\'{o}polis, SC - Brazil}

\title{\bf A primal dual variational formulation and a multi-duality principle for a  non-linear model of plates} 
\date{}
\begin{document}
\maketitle

\abstract{
This article develops a new primal dual formulation for the Kirchhoff-Love non-linear plate model.

At first we establish a duality principle which includes sufficient conditions of global optimality through the
dual formulation. At this point we highlight this first duality principle is specially suitable for the case in which the membrane stress tensor is negative definite.   In a second step, from such a general principle, we develop a primal dual variational formulation which also includes
the corresponding sufficient conditions for global optimality. The results are based on standard tools of convex analysis  and on a well known Toland result for D.C.
optimization.

Finally, in the last section, we present a multi-duality principle and qualitative relations between the critical points of the primal and dual
formulations.
We formally prove there is no duality gap between such primal and dual formulations in a local extremal context.
}

\section{Introduction} In this work we develop a new primal dual variational formulation for the Kirchhoff-Love non-linear plate model.
We emphasize the results here presented may be applied to a large class of non-convex variational problems.

At this point we start to describe the primal formulation.

    Let $\Omega\subset\mathbb{R}^2$ be an open, bounded, connected set  which  represents the middle surface of a plate
of thickness $h$. The boundary of $\Omega$, which is assumed to be regular (Lipschitzian), is
denoted by $\partial \Omega$. The vectorial basis related to the cartesian
system $\{x_1,x_2,x_3\}$ is denoted by $( \textbf{a}_\alpha,
\textbf{a}_3)$, where $\alpha =1,2$ (in general Greek indices stand
for 1 or 2), and where $\textbf{a}_3$ is the vector normal to $\Omega$, whereas $\textbf{a}_1$ and $\textbf{a}_2$ are orthogonal vectors parallel to $\Omega.$  Also,
$\textbf{n}$ is
the outward normal to the plate surface.

    The displacements will be denoted by
$$ \hat{\textbf{u}}=\{\hat{u}_\alpha,\hat{u}_3\}=\hat{u}_\alpha
\textbf{a}_\alpha+ \hat{u}_3 \textbf{a}_3.
$$

The Kirchhoff-Love relations are
\begin{eqnarray}&&\hat{u}_\alpha(x_1,x_2,x_3)=u_\alpha(x_1,x_2)-x_3
w(x_1,x_2)_{,\alpha}\; \nonumber \\ && \text{ and }
\hat{u}_3(x_1,x_2,x_3)=w(x_1,x_2).
\end{eqnarray}

Here $ -h/2\leq x_3 \leq h/2$ so that we have
$u=(u_\alpha,w)\in U$ where
\begin{eqnarray}
U&=&\left\{(u_\alpha,w)\in W^{1,2}(\Omega; \mathbb{R}^2) \times
W^{2,2}(\Omega) , \right.\nonumber \\ &&\; u_\alpha=w=\frac{\partial w}{\partial \textbf{n}}=0 \left.
\text{ on } \partial \Omega \right\} \nonumber \\ &=&W_0^{1,2}(\Omega; \mathbb{R}^2) \times W_0^{2,2}(\Omega).\nonumber\end{eqnarray}
It is worth emphasizing that the boundary conditions here specified refer to a clamped plate.

We define the operator $\Lambda: U \rightarrow Y_1 \times Y_1$, where $Y_1=Y^*_1=L^2(\Omega; \mathbb{R}^{2 \times 2})$, by
$$\Lambda(u)=\{\gamma(u), \kappa(u)\},$$
$$\gamma_{\alpha\beta}(u)= \frac{u_{\alpha,\beta}+u_{\beta,\alpha}}{2}+\frac{w_{,\alpha} w_{,\beta}}{2},$$
$$\kappa_{\alpha \beta}(u)=-w_{,\alpha \beta}.$$
The constitutive relations are given by
\begin{equation}
N_{\alpha\beta}(u)=H_{\alpha\beta\lambda\mu} \gamma_{\lambda\mu}(u),
\end{equation}
\begin{equation}
M_{\alpha\beta}(u)=h_{\alpha\beta\lambda\mu} \kappa_{\lambda\mu}(u),
\end{equation}
where: $\{H_{\alpha\beta\lambda\mu}\}$
 and
 $\{h_{\alpha\beta\lambda\mu}=\frac{h^2}{12}H_{\alpha\beta\lambda\mu}\}$,
 are symmetric positive definite   fourth order tensors. From now on, we denote $\{\overline{H}_{\alpha\beta \lambda \mu}\}=\{H_{\alpha\beta \lambda \mu}\}^{-1}$ and $\{\overline{h}_{\alpha\beta \lambda \mu}\}=\{h_{\alpha\beta \lambda \mu}\}^{-1}$.

 Furthermore
 $\{N_{\alpha\beta}\}$ denote the membrane stress tensor and
 $\{M_{\alpha\beta}\}$ the moment one.
    The plate stored energy, represented by $(G\circ
    \Lambda):U\rightarrow\mathbb{R}$ is expressed by
 \begin{equation}\label{80} (G\circ\Lambda)(u)=\frac{1}{2}\int_{\Omega}
  N_{\alpha\beta}(u)\gamma_{\alpha\beta}(u)\;dx+\frac{1}{2}\int_{\Omega}
  M_{\alpha\beta}(u)\kappa_{\alpha\beta}(u)\;dx
  \end{equation}
 and the external work, represented by $F_1:U\rightarrow\mathbb{R}$, is given by
   \begin{equation}\label{81} F_1(u)=\langle w,P \rangle_{L^2(\Omega)}+\langle u_\alpha,P_\alpha \rangle_{L^2(\Omega)}
,\end{equation} where $P, P_1, P_2 \in L^2(\Omega)$ are external loads in the directions $\textbf{a}_3$, $\textbf{a}_1$ and $\textbf{a}_2$ respectively. The potential energy, denoted by
$J:U\rightarrow\mathbb{R}$ is expressed by:
$$
J(u)=(G\circ\Lambda)(u)-F_1(u)
$$

Finally, we also emphasize from now on, as their meaning are clear, we may denote $L^2(\Omega)$ and $L^2(\Omega; \mathbb{R}^{2 \times 2})$ simply by $L^2$, and the respective norms by $\|\cdot \|_2.$ Moreover, unless otherwise indicated, derivatives are always understood in the distributional sense, $\mathbf{0}$ may denote the zero vector in  appropriate Banach spaces and, the following and relating notations are used:
$$w_{,\alpha\beta}=\frac{\partial^2 w}{\partial x_\alpha \partial x_\beta},$$
$$u_{\alpha,\beta}=\frac{\partial u_\alpha}{\partial x_\beta},$$
$$N_{\alpha\beta,1}=\frac{\partial N_{\alpha\beta}}{\partial x_1},$$
and
$$N_{\alpha\beta,2}=\frac{\partial N_{\alpha\beta}}{\partial x_2}.$$

Here we emphasize the general Einstein convention of sum of repeated indices holds throughout  the text, unless otherwise indicated.
\begin{remark} About the references, details on the Sobolev involved may be found in \cite{1}. Mandatory references  are the original results of
Telega and his co-workers in
\cite{2900, 85,10,11}. About convex analysis, the results here developed follow in some extent \cite{120}, for which the main references are \cite{[6],12}.

We emphasize, our results  complement, in some sense, the original ones presented in \cite{2900,85,10,11}.

Finally, existence results for models in elasticity including the plate model here addressed are developed in \cite{903,[3],[4]}.
Similar problems are addressed in \cite{77,9}.
\end{remark}
\section{The first duality principle}
\begin{thm} Let $\Omega \subset \mathbb{R}^2$ be an open, bounded, connected set with a regular (Lipschitzian) boundary denoted by
$\partial \Omega$. Let $U=U_1 \times U_2$ where $U_1=W_0^{1,2}(\Omega;\mathbb{R}^2)$ and $U_2=W_0^{2,2}(\Omega)$. We recall that
$Y_1=Y_1^*=L^2(\Omega; \mathbb{R}^{2 \times 2})$ and define $Y_2=Y_2^*=L^2(\Omega;\mathbb{R}^2)$,
$$\gamma_{\alpha\beta}(u)=\frac{1}{2}(u_{\alpha,\beta}+u_{\beta,\alpha})+\frac{1}{2} w_{,\alpha}w_{,\beta},$$

$$\kappa_{\alpha\beta}(u)=-w_{,\alpha\beta}, \; \forall u \in U,\;\;\alpha, \beta \in \{1,2\}$$
and $J:U \rightarrow \mathbb{R}$ by,
$$J(u)=G_1(\gamma (u))+G_2(\kappa (u))-\langle u, f \rangle_{L^2},$$ where

$$G_1(\gamma (u))=\frac{1}{2}\int_\Omega H_{\alpha\beta\lambda\mu} \gamma_{\alpha\beta}(u) \gamma_{\lambda \mu}(u)\;dx,$$
$$G_2(\kappa (u))=\frac{1}{2}\int_\Omega h_{\alpha\beta\lambda\mu} \kappa_{\alpha\beta}(u) \kappa_{\lambda \mu}(u)\;dx,$$
and
$$\langle u,f \rangle_{L^2}=\langle w,P \rangle_{L^2}+\langle u_\alpha, P_\alpha \rangle_{L^2},$$
where
$$f=(P_1,P_2,P) \in L^2(\Omega ; \mathbb{R}^3).$$

In the next lines we shall denote $$w=-(h_{\alpha\beta\lambda\mu} D_{\lambda \mu}^*D_{\alpha\beta})^{-1}( \text{ div }Q+ \text{ div }z^*-P),$$ if
$$\text{ div }Q+ \text{ div }z^*-P=-h_{\alpha\beta\lambda\mu}w_{,\alpha\beta\lambda\mu}$$
and $$ w \in U_2.$$

Define also $G_2^*:Y_2^* \times Y_2^* \rightarrow \mathbb{R},$ by

\begin{eqnarray}&&G_2^*(z^*,Q)\nonumber \\ &=&\sup_{w \in U}\{\langle w_{,\alpha}, z_{\alpha}^* +Q_{\alpha}\rangle_{L^2}-
G_2(\kappa(u))+\langle P,w\rangle_{L^2}\}\nonumber  \\ &=& \frac{1}{2}\int_\Omega [(h_{\alpha\beta\lambda\mu} D_{\lambda \mu}^*D_{\alpha\beta})^{-1}( \text{ div }Q+ \text{ div }z^*-P)]
[( \text{ div } Q + \text{ div } z^*-P)]\;dx, \nonumber\end{eqnarray}
and $\tilde{G}_1^*: Y_2^* \times Y_1^* \rightarrow \mathbb{R} \cup \{+\infty\}$ where
\begin{eqnarray}\tilde{G}_1^*(-Q,N)&=&\sup_{ (v_1,v_2) \in Y_1 \times Y_2}\left\{- \langle (v_2)_\alpha, Q_\alpha \rangle_{L^2}+\langle (v_1)_{\alpha\beta},N_{\alpha\beta}\rangle_{L^2}
\right. \nonumber \\ && \left.-G_1\left(\left\{(v_1)_{\alpha\beta}+\frac{1}{2}(v_2)_\alpha (v_2)_\beta\right\}\right)-\frac{1}{2}\int_\Omega K_{\alpha\beta}(v_2)_\alpha (v_2)_\beta\;dx \right\} \nonumber \\ &=& \frac{1}{2}\int_\Omega \overline{N_{\alpha\beta}+K_{\alpha \beta}}Q_\alpha Q_\beta\;dx
\nonumber \\ && +\frac{1}{2}\int_\Omega \overline{H}_{\alpha\beta\lambda\mu}N_{\alpha \beta}N_{\lambda \mu}\;dx
\end{eqnarray}
if $$\{N_{\alpha\beta}+K_{\alpha\beta}\}$$ is positive definite.

Here we have denoted,
$$\{\overline{N_{\alpha\beta}+K_{\alpha\beta}}\}=\{{N_{\alpha\beta}+K_{\alpha\beta}}\}^{-1}.$$

We denote also,
\begin{eqnarray}
F^*(z^*)&=& \sup_{v_2 \in Y_2} \left\{ \langle (v_2)_\alpha, (z_2^*)_{\alpha} \rangle_{L^2}-\frac{1}{2}\int_\Omega K_{\alpha\beta}(v_2)_\alpha (v_2)_\beta\;dx
\right.\nonumber \\ &=& \left.\frac{1}{2}\int_\Omega \overline{K}_{\alpha\beta} (z_2^*)_{\alpha} (z_2^*)_{\beta}\;dx \right.
\nonumber \\ &=& \left.\frac{1}{2}\int_\Omega \overline{-N_{\alpha\beta}+\varepsilon \delta_{\alpha\beta}} (z_2^*)_{\alpha} (z_2^*)_{\beta}\;dx\right\},
\end{eqnarray}
if $\{K_{\alpha\beta}\}$ is positive definite, where
$$\{\overline{K}_{\alpha\beta}\}=\{K_{\alpha\beta}\}^{-1},$$
$$\{K_{\alpha\beta}\}=\{-N_{\alpha_\beta}+\varepsilon \delta_{\alpha\beta}\},$$
$$\{\overline{H}_{\alpha\beta\lambda\mu}\}=\{H_{\alpha\beta\lambda\mu}\}^{-1},$$
and $$F(\{w_\alpha\})=\frac{1}{2} \int_\Omega K_{\alpha\beta} w_{,\alpha}w_{,\beta}\;dx,$$
for some $$\varepsilon>0.$$

At this point we also define,
$$B^*=\{ N \in Y_1^*\;:\;\{K_{\alpha\beta}\} \text{ is positive definite and } \hat{J}^*(N,z^*)>0, \forall z^* \in Y_2^* \text{ such that } z^* \neq \mathbf{0}\}$$
where,
\begin{eqnarray}\hat{J}^*(N,z^*)&=& F^*(z^*)-G^*_2(z^*,\mathbf{0}) \nonumber \\ &=& \frac{1}{2}\int_\Omega \overline{(-N_{\alpha\beta}+ \varepsilon \delta_{\alpha\beta})}
z_\alpha^*z_\beta^*\;dx \nonumber \\ &&- \frac{1}{2}\int_\Omega [(h_{\alpha\beta\lambda\mu} D_{\lambda \mu}^*D_{\alpha\beta})^{-1}( \text{ div } z^*)][ \text{ div }z^*]\;dx.
\end{eqnarray}
Moreover, we denote,
$$C^*=\{N \in Y_1^*\;:\; N_{\alpha\beta,\beta}+P_\alpha=0, \text{ in } \Omega\}$$
and define $$A^*=B^* \cap C^*.$$
Assume $u_0 \in U$ is such that $\delta J(u_0)=\mathbf{0}$ and $N_0 \in B^*$, where
$$\{(N_0)_{\alpha\beta}\}=\{H_{\alpha\beta\lambda\mu}\gamma_{\lambda \mu}(u_0)\}.$$

Under such hypotheses,

\begin{eqnarray}
J(u_0)&=& \inf_{u \in U} J(u) \nonumber \\ &=& \sup_{(Q,N) \in Y_2^*\times A^*} \tilde{J}^*(Q,N) \nonumber \\ &=&
\tilde{J}^*(Q_0,N_0) \nonumber \\ &=& J^*(z_0^*,Q_0,N_0),
\end{eqnarray}
where
$$J^*(z^*,Q,N)=F^*(z^*)-G_2^*(z^*,Q)-\tilde{G}_1^*(-Q,N),$$
$$\tilde{J}^*(Q,N)=\inf_{ z^* \in Y_2^*} J^*(z^*,Q,N),$$
and where
$$(z_0^*)_{\alpha}=(-(N_0)_{\alpha\beta}+ \varepsilon \delta_{\alpha\beta})(w_0)_{,\beta},$$
and
$$Q_0=-\varepsilon \nabla w_0.$$
\end{thm}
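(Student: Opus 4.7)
The plan is to establish the duality via a D.C.\ (difference of convex) decomposition combined with Toland's duality principle and sequential Fenchel--Rockafellar conjugation. First I would rewrite
$$J(u) = \Phi(u) - F(\nabla w), \quad \Phi(u) = G_1(\gamma(u)) + G_2(\kappa(u)) + F(\nabla w) - \langle u, f\rangle_{L^2},$$
where $F(\nabla w) = \frac{1}{2}\int_\Omega K_{\alpha\beta}w_{,\alpha}w_{,\beta}\,dx$ is the auxiliary quadratic with $K_{\alpha\beta}=-N_{\alpha\beta}+\varepsilon\delta_{\alpha\beta}$. The role of $F$ is to convexify: the Hessian of $G_1\circ\gamma$ along the $\nabla w$ directions contributes $N_{\alpha\beta}w_{,\alpha}w_{,\beta}$, so the bracket $\Phi$ is convex in the relevant sense precisely when $N+K$ is positive definite, which is why $\tilde{G}_1^*$ carries that positivity as the condition defining its effective domain.

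Next I would compute the conjugates explicitly. The Fenchel conjugate $F^*$ in $\nabla w$ yields the stated quadratic involving $\overline{K}$. Sequential partial conjugation of $\Phi$ in the stages $w_{,\alpha\beta}$, then $w_{,\alpha}$, and finally $\gamma_{\alpha\beta}$, produces $G_2^*(z^*,Q)$ (the bending contribution, involving inversion of the biharmonic-type operator $h_{\alpha\beta\lambda\mu}D^*_{\lambda\mu}D_{\alpha\beta}$ under clamped boundary conditions) and $\tilde{G}_1^*(-Q,N)$ (the membrane contribution with the quadratic perturbation absorbed). The equilibrium constraint $N \in C^*$ emerges as the indicator of the effective domain after Fenchel--Rockafellar conjugation in the $u_\alpha$ variables, while the set $B^*$ encodes the positivity required to keep $\hat{J}^*(N,z^*)>0$, which in turn rules out degenerate directions of minimization in $z^*$ and secures attainment of the inner infimum in $\tilde{J}^*$.

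I would then apply Toland's principle to the D.C.\ decomposition $J=\Phi-F(\nabla w)$. For any $u\in U$ and any admissible $(Q,N)\in Y_2^*\times A^*$, the Fenchel--Young inequality applied at each conjugation stage, together with the constraint $N\in C^*$ killing the $\langle u_\alpha, P_\alpha\rangle$ duality pairing, yields the weak duality $J(u)\geq \tilde{J}^*(Q,N)$. Equality in Fenchel--Young holds exactly under the extremality relations
$$(z_0^*)_\alpha = K_{\alpha\beta}(w_0)_{,\beta}, \quad (Q_0)_\alpha = -\varepsilon(w_0)_{,\alpha}, \quad (N_0)_{\alpha\beta} = H_{\alpha\beta\lambda\mu}\gamma_{\lambda\mu}(u_0),$$
together with $(N_0)_{\alpha\beta,\beta}+P_\alpha=0$, which follows from $\delta_{u_\alpha}J(u_0)=\mathbf{0}$ and gives $N_0\in C^*$. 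Combined with the assumption $N_0\in B^*$ this places $N_0\in A^*$, and a direct substitution shows $J(u_0)=J^*(z_0^*,Q_0,N_0)=\tilde{J}^*(Q_0,N_0)$; together with weak duality this delivers global optimality of $u_0$ and attainment of the sup at $(Q_0,N_0)$.

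The main obstacle I expect is in the passage from weak to strong duality: verifying that the effective-domain condition embedded in $\tilde{G}_1^*$ (namely $\{N_{\alpha\beta}+K_{\alpha\beta}\}$ positive definite) is preserved at and around $N_0$, that the inversion of $h_{\alpha\beta\lambda\mu}D^*_{\lambda\mu}D_{\alpha\beta}$ appearing in $G_2^*$ is well defined for the specific $z_0^*$, $Q_0$ produced by the primal critical point, and that $A^*$ is rich enough for the sup to be genuinely attained at $(Q_0,N_0)$ rather than only approached. The non-convex coupling $\frac{1}{2}w_{,\alpha}w_{,\beta}$ inside $\gamma_{\alpha\beta}$ is precisely what forces the Toland framework over direct Fenchel duality, and its careful accounting through the perturbation $\varepsilon\delta_{\alpha\beta}$ is the core technical point.
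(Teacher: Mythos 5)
Your proposal follows essentially the same route as the paper: a D.C.\ splitting with the auxiliary quadratic $F(\nabla w)=\frac{1}{2}\int_\Omega K_{\alpha\beta}w_{,\alpha}w_{,\beta}\,dx$, Toland's principle plus stagewise Fenchel--Young inequalities (with $N\in C^*$ cancelling the $\langle u_\alpha,N_{\alpha\beta,\beta}+P_\alpha\rangle$ term) to get the weak duality $J(u)\geq\tilde{J}^*(Q,N)$, and then the extremality relations $(z_0^*)_\alpha=K_{\alpha\beta}(w_0)_{,\beta}$, $Q_0=-\varepsilon\nabla w_0$, $N_0=H\gamma(u_0)$ derived from $\delta J(u_0)=\mathbf{0}$ to obtain equality $J(u_0)=J^*(z_0^*,Q_0,N_0)=\tilde{J}^*(Q_0,N_0)$, with $N_0\in B^*$ securing attainment of the inner infimum in $z^*$. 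This matches the paper's argument in both structure and in the role assigned to each of $B^*$, $C^*$ and the perturbation $\varepsilon\delta_{\alpha\beta}$.
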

\begin{proof}
From the general result in Toland \cite{12}, we have
\begin{eqnarray}
&&\inf_{z^* \in Y_2^*} J^*(z^*,Q,N)
\nonumber \\ &=& \inf_{z^* \in Y_2^*}\{F^*(z^*)-G_2^*(z^*,Q)-\tilde{G}_1^*(-Q,N)\}
\nonumber \\ &\leq& -\langle w_{,\alpha},z^*_\alpha \rangle_{L^2}- \langle w_{,\alpha}, Q_{\alpha}\rangle_{L^2}
+\frac{1}{2}\int_\Omega h_{\alpha\beta\lambda\mu} w_{,\alpha\beta}w_{,\lambda\mu}\;dx
\nonumber \\ && +\langle w_{,\alpha},z^*_\alpha \rangle_{L^2}-\frac{1}{2}\int_\Omega(-N_{\alpha\beta}+\varepsilon \delta_{\alpha\beta})w_{,\alpha}w_{,\beta}\;dx
-\langle w,P \rangle_{L^2}\nonumber \\ && - \frac{1}{2\varepsilon} \int_\Omega \delta_{\alpha\beta}Q_{\alpha}
 Q_\beta\;dx \nonumber \\ && -\frac{1}{2}\int_\Omega \overline{H}_{\alpha\beta\lambda\mu}N_{\alpha\beta}N_{\lambda \mu}\;dx
 -\langle u_\alpha, N_{\alpha \beta,\beta}+P_\alpha \rangle_{L^2}
 \nonumber \\ &\leq& -\langle w_{,\alpha},z^*_\alpha \rangle_{L^2}- \langle w_{,\alpha}, Q_{\alpha}\rangle_{L^2}
+\frac{1}{2}\int_\Omega h_{\alpha\beta\lambda\mu} w_{,\alpha\beta}w_{,\lambda\mu}\;dx
\nonumber \\ && +\langle w_{,\alpha},z^*_\alpha \rangle_{L^2}-\frac{1}{2}\int_\Omega(-N_{\alpha\beta}+\varepsilon \delta_{\alpha\beta})w_{,\alpha}w_{,\beta}\;dx
-\langle w,P \rangle_{L^2}\nonumber \\ && +\langle w_{,\alpha},Q_\alpha^* \rangle_{L^2}+\frac{1}{2} \int_\Omega \varepsilon \delta_{\alpha\beta}w_{,\alpha}
 w_{,\beta}\;dx \nonumber \\ && -\frac{1}{2}\int_\Omega \overline{H}_{\alpha\beta\lambda\mu}N_{\alpha\beta}N_{\lambda \mu}\;dx
 -\langle u_\alpha, N_{\alpha_\beta,\beta}+P_\alpha \rangle_{L^2},
 \end{eqnarray}
 $\forall u \in U,\; Q \in Y_2^*,\; N \in A^*.$

 Therefore,
 \begin{eqnarray}
 &&\inf_{z^* \in Y_2^*} J^*(z^*,Q,N)
 \nonumber \\ &\leq& \frac{1}{2}\int_\Omega h_{\alpha\beta\lambda\mu} w_{,\alpha\beta}w_{,\lambda\mu}\;dx
+\frac{1}{2}\int_\Omega N_{\alpha\beta} w_{,\alpha}w_{,\beta}\;dx
\nonumber \\ &&  +\left\langle \left(\frac{u_{\alpha,\beta}+u_{\beta,\alpha}}{2}\right), N_{\alpha\beta} \right\rangle_{L^2}
-\frac{1}{2}\int_\Omega \overline{H}_{\alpha\beta\lambda\mu} N_{\alpha\beta}N_{\lambda\mu}\;dx \nonumber \\ &&
-\langle w,P\rangle_{L^2}-\langle u_\alpha,P_\alpha \rangle_{L^2}
\nonumber \\ &\leq&
\sup_{N \in Y^*_1}\left\{ \left\langle \frac{u_{\alpha,\beta}+u_{\beta,\alpha}}{2}+\frac{1}{2}w_{,\alpha} w_{,\beta}, N_{\alpha\beta} \right\rangle_{L^2}
-\frac{1}{2}\int_\Omega \overline{H}_{\alpha\beta\lambda\mu} N_{\alpha\beta}N_{\lambda\mu}\;dx \right\}
\nonumber \\ &&+\frac{1}{2}\int_\Omega h_{\alpha\beta\lambda\mu} w_{,\alpha\beta}w_{,\lambda\mu}\;dx \nonumber \\ &&
-\langle w,P\rangle_{L^2}-\langle u_\alpha,P_\alpha \rangle_{L^2} \nonumber \\ &=&
\frac{1}{2} \int_\Omega \gamma_{\alpha\beta}(u)\gamma_{\lambda \mu}(u)\;dx+\frac{1}{2}\int_\Omega h_{\alpha\beta\lambda\mu} \kappa_{\alpha\beta}(u)\kappa_{\lambda\mu}(u)\;dx \nonumber \\ &&
-\langle w,P\rangle_{L^2}-\langle u_\alpha,P_\alpha \rangle_{L^2} \nonumber \\ &=& J(u), \end{eqnarray}
$\forall u \in U,\; Q \in Y_2^*,\; N \in A^*.$

Summarizing,
\begin{eqnarray}\label{q2}
J(u) &\geq& \inf_{z^* \in Y_2^*} J^*(z^*,Q,N)
\nonumber \\ &=& \tilde{J}^*(Q,N) \end{eqnarray}
$\forall u \in U,\; Q \in Y_2^*, \; N \in A^*,$ so that

\begin{equation}\label{q3}
\inf_{u \in U} J(u) \geq \sup_{(Q,N) \in Y_2^* \cap A^*} \tilde{J}^*(Q,N). \end{equation}

Suppose now $u_0 \in U$ is such that
\begin{equation}\label{q4} \delta J(u_0)=\mathbf{0},\end{equation}
and
$N_0 \in B^*.$

Observe that, from (\ref{q4}),
$$(N_0)_{\alpha\beta,\beta}+P_\alpha=0, \text{ in } \Omega,$$
so that
$$N_0 \in C^*.$$

Hence, $$N_0 \in A^*=B^* \cap C^*.$$

Moreover, from  $\delta J(u_0)=\mathbf{0}$, we have
\begin{equation}\label{q5}-\text{ div } (Q_0+z_0^*)=h_{\alpha\beta\lambda\mu}(w_0)_{\alpha\beta\lambda\mu}-P \text{ in } \Omega,\end{equation}
where, as above indicated
\begin{equation}\label{q6}(z_0^*)_\alpha=(-(N_0)_{\alpha\beta}+\varepsilon \delta_{\alpha\beta})(w_0)_{,\beta},\end{equation}
and $$Q_0=-\varepsilon \nabla w_0.$$
From (\ref{q5}), $$w_0=- (h_{\alpha\beta\lambda\mu}D_{\lambda\mu}^*D_{\alpha\beta})^{-1}(\text{ div}(z_0^*+Q_0)-P),$$
so that from this and the inversion of (\ref{q6}), we have
\begin{eqnarray}
(w_0)_{,\rho}&=&[(h_{\alpha\beta\lambda\mu}D_{\lambda\mu}^*D_{\alpha\beta})^{-1}(-\text{ div}(z_0^*+Q_0)+P)]_{,\rho}
\nonumber \\ &=& \overline{((-N_0)_{\rho\beta}+\varepsilon \delta_{\rho\beta})} (z_0^*)_\beta,
\end{eqnarray}
so that $$[(h_{\alpha\beta\lambda\mu}D_{\lambda\mu}^*D_{\alpha\beta})^{-1}(\text{ div}(z_0^*+Q_0)-P)]_{,\rho}
+\overline{((-N_0)_{\rho\beta}+\varepsilon \delta_{\rho\beta})} (z_0^*)_\beta=0, \text{ in } \Omega,$$ that is,
$$\frac{\partial \hat{J}_1^*(z_0^*,Q_0,w_0,u_0)}{\partial z^*}=\mathbf{0},$$
where
$$\hat{J}^*_1(z^*,Q,N,u)=J^*(z^*,Q,N)+\langle u_\alpha,  N_{\alpha\beta,\beta}+P_\alpha \rangle_{L^2}.$$

Also, from (\ref{q5}) and (\ref{q6}) we obtain
 \begin{eqnarray}
 -\text{ div }Q_0 &=& \varepsilon \text{ div }\nabla w_0
 \nonumber \\ &=& \varepsilon \nabla^2 w_0 \nonumber \\ &=&\varepsilon \delta_{\alpha \beta} w_{,\alpha\beta}
  \nonumber \\ &=&h_{\alpha\beta\lambda\mu} (w_0)_{\alpha\beta\lambda\mu}-[((N_0)_{\alpha\beta}-\varepsilon \delta_{\alpha\beta})(w_0)_{,\beta}]_{,\alpha}-P
 \nonumber \\ &=&  h_{\alpha\beta\lambda\mu} (w_0)_{\alpha\beta\lambda\mu}+\text{ div }z_0^* -P.
\end{eqnarray}
Furthermore,
\begin{eqnarray}\frac{(Q_0)_\rho}{\varepsilon}&=&-(w_0)_{,\rho}
\nonumber \\ &=&[(h_{\alpha\beta\lambda\mu}D_{\lambda\mu}^*D_{\alpha\beta})^{-1}(\text{ div}(z_0^*+Q_0)-P)]_{,\rho}.
\end{eqnarray}

This last equation corresponds to
$$\frac{\partial \hat{J}^*_1(z_0^*,Q_0,N_0,u_0)}{\partial Q_\alpha}=0.$$

Moreover
$$\overline{H}_{\alpha\beta\lambda\mu} (N_0)_{\lambda\mu}=\left(\frac{(u_0)_{\alpha,\beta}+(u_0)_{\beta,\alpha}}{2}\right)+\frac{1}{2} (w_0)_{,\alpha} (w_0)_{,\beta},$$
so that
\begin{eqnarray}
&&\overline{H}_{\alpha\beta\lambda\mu} (N_0)_{\lambda\mu}\nonumber \\ &=&\left(\frac{(u_0)_{\alpha,\beta}+(u_0)_{\beta,\alpha}}{2}\right)
\nonumber \\ &&+ \left(\frac{1}{2}\overline{(-(N_0)_{\alpha \rho}+\varepsilon \delta_{\alpha\rho})} (z_0^*)_\rho\right)\left(
 \overline{(-(N_0)_{\beta \eta}+\varepsilon \delta_{\beta\eta})} (z_0^*)_\eta\right),
 \end{eqnarray}
 which means
 $$\frac{\partial \hat{J}_1^*(z_0^*,Q_0,N_0,u_0)}{\partial N_{\alpha\beta}}=0.$$

 Finally, from $$N_{\alpha\beta, \beta}+P_\alpha=0, \text{ in } \Omega,$$ we get
 $$\frac{\partial \hat{J}^*_1(z_0^*,Q_0,N_0,u_0)}{\partial u_\alpha}=0.$$

 Summarizing, we have obtained
 $$\delta \hat{J}^*_1(z_0^*,Q_0,N_0,u_0)=\mathbf{0}.$$

 At this point we shall obtain a standard correspondence between the primal and dual formulations.

 First, we recall that from $$(z_0^*)_\alpha= (-N_{\alpha\beta}+\varepsilon \delta_{\alpha\beta})(w_0)_{,\beta},$$
 we have
 $$F^*(z^*_0)=\langle (w_0)_\alpha,(z_0^*)_\alpha \rangle_{L^2}-\frac{1}{2}(-N_{\alpha\beta}+\varepsilon \delta_{\alpha\beta})(w_0)_{,\alpha}(w_0)_{,\beta}.$$

 From $$\text{ div } Q_0+\text{ div } z_0^*=-h_{\alpha\beta \lambda \mu}(w_0)_{\alpha\beta\lambda \mu}+P$$
 we obtain
 $$G_2^*(z_0^*,Q_0)= \langle (w_0)_\alpha,(Q_0)_\alpha \rangle_{L^2} +\langle (w_0)_\alpha,(z_0^*)_\alpha \rangle_{L^2}-G_2(\kappa(u_0))+\langle w_0,P\rangle_{L^2}.$$

 Finally, from $$\frac{-(Q_0)_\alpha}{\varepsilon}=(w_0)_{,\alpha}$$ and $$(N_0)_{\alpha\beta}=H_{\alpha\beta\lambda\mu} \gamma_{\lambda\mu}(u_0),$$
 we get
 \begin{eqnarray} \tilde{G}_1^*(-Q_0,N_0)&=& -\langle (w_0)_\alpha,(Q_0)_\alpha \rangle_{L^2}+\left\langle \frac{(u_0)_{\alpha,\beta}+(u_0)_{\beta,\alpha}}{2}+\frac{(w_0)_{,\alpha}(w_0)_{,\beta}}{2},(N_0)_{\alpha\beta} \right\rangle_{L^2} \nonumber \\ &&-G_1(\gamma(u_0))-\frac{\varepsilon}{2}\int_\Omega \delta_{\alpha\beta}(w_0)_{,\alpha}(w_0)_{,\beta}\;dx \nonumber \\ &=&-\langle (w_0)_\alpha,(Q_0)_\alpha \rangle_{L^2}+\left\langle\frac{(w_0)_{,\alpha}(w_0)_{,\beta}}{2},(N_0)_{\alpha\beta}\right\rangle_{L^2} \nonumber \\ &&-\frac{\varepsilon}{2}\int_\Omega \delta_{\alpha\beta}(w_0)_{,\alpha}(w_0)_{,\beta}\;dx+\langle u_\alpha,P_\alpha\rangle_{L^2}
 \nonumber \\ && -\frac{1}{2}\int_\Omega H_{\alpha\beta\lambda\mu} \gamma_{\alpha\beta}(u_0)\gamma_{\lambda \mu}(u_0)\;dx.
 \end{eqnarray}

 Joining the pieces, we obtain
 \begin{eqnarray}\label{q10}
 \hat{J}^*_1(z_0^*,Q_0,N_0,u_0) &=& J^*(z_0,Q_0,N_0) \nonumber \\ &=& F^*(z_0^*)-G_2^*(z_0^*,Q_0)-\tilde{G}_1^*(-Q_0,N_0)
 \nonumber \\ &=& G_2(\kappa u_0)+G_1(\gamma (u_0))-\langle w_0,P \rangle_{L^2}-\langle (u_0)_\alpha, P_\alpha \rangle_{L^2}
 \nonumber \\ &=& J(u_0).\end{eqnarray}

 Moreover, since $N_0 \in A^*$, we have

 \begin{eqnarray}J^*(z_0^*,Q_0,N_0)&=& \inf_{z^* \in Y_2^*}J^*(z^*,Q_0,N_0) \nonumber \\ &=&\tilde{J}^*(Q_0,N_0).\end{eqnarray}
 From this,  (\ref{q3}) and (\ref{q10}), we obtain
  \begin{eqnarray}
J(u_0)&=& \inf_{u \in U} J(u) \nonumber \\ &=& \sup_{(Q,N) \in Y_2^*\times A^*} \tilde{J}^*(Q,N) \nonumber \\ &=&
\tilde{J}^*(Q_0,N_0) \nonumber \\ &=& J^*(z_0^*,Q_0,N_0),
\end{eqnarray}

The proof is complete.
\end{proof}
\section{The primal dual formulation and related duality principle}
At this point we present the main result of this article, which is summarized by the next theorem.
\begin{thm}Consider the notation and context of the last theorem.
Assume those hypotheses, more specifically suppose $\delta J(u_0)=\mathbf{0}$ and $N_0 \in B^*$, where
 $$\{(N_0)_{\alpha\beta}\}=\{H_{\alpha\beta\lambda\mu}\gamma_{\lambda \mu}(u_0)\},$$
$$(z_0^*)_{\alpha}=(-(N_0)_{\alpha\beta}+ \varepsilon \delta_{\alpha\beta})(w_0)_{,\beta},$$
and
$$ Q_0=-\varepsilon \nabla w_0.$$
Recall also that
$$B^*=\{ N \in Y_1^*\;:\;\{K_{\alpha\beta}\} \text{ is positive definite and } \hat{J}^*(N,z^*)>0, \forall z^* \in Y_2^* \text{ such that } z^* \neq \mathbf{0}\}$$
where,
\begin{eqnarray}\hat{J}^*(N,z^*)&=& F^*(z^*)-G^*_2(z^*,\mathbf{0}) \nonumber \\ &=& \frac{1}{2}\int_\Omega \overline{(-N_{\alpha\beta}+ \varepsilon \delta_{\alpha\beta})}
z_\alpha^*z_\beta^*\;dx \nonumber \\ &&- \frac{1}{2}\int_\Omega [(h_{\alpha\beta\lambda\mu} D_{\lambda \mu}^*D_{\alpha\beta})^{-1}( \text{ div } z^*)][ \text{ div }z^*]\;dx.
\end{eqnarray}
Moreover,
$$C^*=\{N \in Y_1^*\;:\; N_{\alpha\beta,\beta}+P_\alpha=0, \text{ in } \Omega\}$$
and $$A^*=B^* \cap C^*.$$

Under such assumptions and notation, denoting also $$\hat{Y}_2^*=\{ Q \in Y_2^*\;:\; Q=\nabla v, \text{ for some } v \in W^{2,2}_0(\Omega)\},$$
we have
\begin{eqnarray}
J(u_0)&=&\inf_{u \in U} J(u) \nonumber \\ &=& \sup_{(Q,N) \in \hat{Y}_2^* \times A^*}\tilde{J}^*(Q,N)
\nonumber \\ &=& \tilde{J}^*(Q_0,N_0) \nonumber \\ &=& J^*(z_0^*,Q_0,N_0) \nonumber \\ &=& J_3(w_0,N_0)
\nonumber \\ &=& \sup_{(w, N) \in U \times A^*} J_3(w,N)
\end{eqnarray}
where,
\begin{eqnarray}J_3(w,N)&=& -\frac{1}{2}\int_\Omega h_{\alpha\beta\lambda\mu} w_{,\alpha\beta} w_{,\lambda \mu}\;dx
\nonumber \\ && -\frac{1}{2}\int_\Omega (N_{\alpha\beta}-\varepsilon \delta_{\alpha\beta})w_{\,\alpha}w_{,\beta}\;dx
\nonumber \\ &&-\frac{1}{2 \varepsilon} \int_\Omega\left[(-\nabla^2)^{-1}\left(h_{\alpha\beta\lambda\mu} w_{,\alpha\beta\lambda\mu}
-[(N_{\alpha\beta}-\varepsilon \delta_{\alpha\beta})w_{,\beta}]_{,\alpha}-P\right)\right] \nonumber \\ && \times \left(h_{\alpha\beta\lambda\mu} w_{,\alpha\beta\lambda\mu}
-[(N_{\alpha\beta}-\varepsilon \delta_{\alpha\beta})w_{,\beta}]_{,\alpha}-P\right) \;dx\nonumber \\ &&
- \frac{1}{2}\int_\Omega \overline{H}_{\alpha\beta\lambda\mu}N_{\alpha\beta}N_{\lambda \mu}\;dx,
\end{eqnarray}
where generically we have denoted $$\hat{w}=(\nabla^{2})^{-1} \eta \text{ for } \eta \in L^2(\Omega),$$ if $\eta={\nabla}^2 \hat{w}$ and $\hat{w} \in W^{2,2}_0(\Omega).$
\end{thm}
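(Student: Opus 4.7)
The plan is to lean on Theorem~2.1, which already delivers most of the chain, and to recognize $J_3$ as an explicit reparameterization of $\tilde{J}^*$ when the dual argument $Q$ is restricted to the subspace $\hat{Y}_2^*$ of gradients of $W_0^{2,2}(\Omega)$ functions. From that theorem one immediately has $J(u_0) = \inf_{u \in U}J(u) = \sup_{(Q,N) \in Y_2^* \times A^*}\tilde{J}^*(Q,N) = \tilde{J}^*(Q_0,N_0) = J^*(z_0^*, Q_0, N_0)$, and since $w_0 \in W_0^{2,2}(\Omega)$ one has $Q_0 = -\varepsilon\nabla w_0 \in \hat{Y}_2^*$; the restricted supremum over $\hat{Y}_2^* \times A^*$ therefore both sits below the unrestricted one and contains the optimizer $Q_0$, so the two coincide.

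The new ingredient is the passage to $J_3$. For each $(w, N) \in U \times A^*$ I would set $z_\alpha^* = (-N_{\alpha\beta}+\varepsilon\delta_{\alpha\beta})w_{,\beta}$ and $Q_\alpha = v_{,\alpha}$, where $v = (-\nabla^2)^{-1}\psi(w, N) \in W_0^{2,2}(\Omega)$ and $\psi(w,N) = h_{\alpha\beta\lambda\mu}w_{,\alpha\beta\lambda\mu} - [(N_{\alpha\beta} - \varepsilon\delta_{\alpha\beta})w_{,\beta}]_{,\alpha} - P$, so that $Q \in \hat{Y}_2^*$. A short calculation yields $\operatorname{div}(z^* + Q) = -h_{\alpha\beta\lambda\mu}w_{,\alpha\beta\lambda\mu} + P$, hence the biharmonic envelope equation defining $G_2^*(z^*, Q)$ returns exactly $w$. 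Expanding $F^*$, $G_2^*$, $\tilde{G}_1^*$ and integrating by parts under the clamped boundary conditions then delivers $J^*(z^*, Q, N) = J_3(w, N)$. At $(w_0, N_0)$ the $w$-equilibrium equation produced in Theorem~2.1 gives $\psi(w_0, N_0) = \varepsilon\nabla^2 w_0$, whence $v = -\varepsilon w_0$, $Q = Q_0$, and $J_3(w_0, N_0) = J^*(z_0^*, Q_0, N_0) = J(u_0)$.

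To close the loop I would argue that $z^* = K\nabla w$ is in fact the global minimizer of $z^* \mapsto F^*(z^*) - G_2^*(z^*, Q)$: because $G_2^*(z^*, Q) - G_2^*(z^*, 0)$ is linear in $z^*$ plus a constant, the Hessian of $F^* - G_2^*(\cdot, Q)$ coincides with that of $\hat{J}^*(N, \cdot) = F^* - G_2^*(\cdot, 0)$, which is positive definite precisely because $N \in B^*$. Therefore $J_3(w, N) = \tilde{J}^*(Q(w, N), N)$, and since $Q(w, N) \in \hat{Y}_2^*$ it follows that $\sup_{(w, N) \in U \times A^*}J_3(w, N) \leq \sup_{\hat{Y}_2^* \times A^*}\tilde{J}^*(Q, N) = J(u_0)$; the reverse inequality is immediate from $J_3(w_0, N_0) = J(u_0)$. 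The main obstacle I anticipate is precisely this passage from stationarity to global optimality for the inner d.c. problem, i.e., extracting from $N \in B^*$ a Hessian positivity that is uniform in $Q$; once this coercivity is secured, every remaining step reduces to the envelope theorem, the symmetries of the elastic tensors $H$ and $h$, and integration by parts under the clamped boundary conditions.
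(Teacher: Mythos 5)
Your proposal is correct and follows essentially the same route as the paper: the chain of equalities is inherited from Theorem~2.1, and $J_3(w,N)$ is identified with $\tilde{J}^*(Q,N)$ through the correspondence $z^*_\alpha=(-N_{\alpha\beta}+\varepsilon\delta_{\alpha\beta})w_{,\beta}$, $-\operatorname{div}Q=h_{\alpha\beta\lambda\mu}w_{,\alpha\beta\lambda\mu}-[(N_{\alpha\beta}-\varepsilon\delta_{\alpha\beta})w_{,\beta}]_{,\alpha}-P$, exactly as in the paper (you merely run the change of variables from $(w,N)$ to $(z^*,Q)$ rather than from $(Q,N)$ to $w$). Your explicit observation that $N\in B^*$ makes the quadratic form of $z^*\mapsto F^*(z^*)-G_2^*(z^*,Q)$ positive definite uniformly in $Q$ --- so that the stationary $z^*$ is the global minimizer of the inner problem --- is the one point the paper leaves implicit, and it is a welcome clarification rather than a deviation.
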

\begin{proof}
Observe that
$$\tilde{J}^*(Q,N)=\inf_{z^* \in Y_2^*}\{F^*(z^*)-G_2^*(z^*,Q)-\tilde{G}_1^*(-Q,N)\},$$
$\forall Q \in \hat{Y}_2^*, \; N \in A^*.$

Also, such an infimum is attained through the equation
$$\frac{\partial F^*(z^*)}{\partial z^*}=\frac{\partial G^*_2(z^*,Q)}{\partial z^*},$$
that is
\begin{eqnarray}&&-\nabla[(h_{\alpha\beta\lambda\mu}D_{\lambda\mu}^*D_{\alpha\beta})^{-1}(\text{ div }(z^*+Q)-P)]\nonumber \\ &=&\{\overline{(-N_{\alpha\beta}+\varepsilon\delta_{\alpha\beta})}(z_2^*)_\beta\}, \end{eqnarray}
that is
\begin{eqnarray}&&-\text{ div }\left(\nabla(h_{\alpha\beta\lambda\mu}D_{\lambda\mu}^*D_{\alpha\beta})^{-1}(\text{ div }(z^*+Q)-P)\right)\nonumber \\ &=&[\overline{(-N_{\alpha\beta}+\varepsilon\delta_{\alpha\beta})}(z_2^*)_\beta]_{,\alpha}
\nonumber \\ &=& \nabla^2 w, \end{eqnarray}
where $$w=-(h_{\alpha\beta\lambda\mu}D_{\lambda\mu}^*D_{\alpha\beta})^{-1}(\text{ div }(z^*+Q)-P).$$

Hence,
\begin{eqnarray}
F^*(z^*)&=& \langle w_{,\alpha},z^*_\alpha\rangle_{L^2}
-\frac{1}{2}\langle(-N_{\alpha\beta}+\varepsilon\delta_{\alpha\beta})w_{,\alpha}w_{,\beta}\rangle_{L^2}.
\end{eqnarray}
From
$$-\text{ div }(z^*+Q)=h_{\alpha\beta\lambda\mu}w_{,\alpha\beta\lambda\mu}-P,$$
and
$$(z^*_\alpha)_{,\alpha}=[(-N_{\alpha\beta}+\varepsilon\delta_{\alpha\beta})(w_{,\beta})]_{,\alpha},$$
we obtain
\begin{eqnarray}-\text{ div }Q&=&h_{\alpha\beta\lambda\mu}w_{,\alpha\beta\lambda\mu}-P+\text{ div }(z^*)
\nonumber \\ &=&h_{\alpha\beta\lambda\mu}w_{,\alpha\beta\lambda\mu}-[(N_{\alpha\beta}-\varepsilon\delta_{\alpha\beta})(w_{,\beta})]_{,\alpha}-P. \end{eqnarray}

Let $v \in W^{2,2}_0(\Omega)$ be such that $Q=\nabla v$.

From the last equation $$ -\nabla^2 v=-\text{ div }(\nabla v)=-\text{ div } Q=h_{\alpha\beta\lambda\mu}w_{,\alpha\beta\lambda\mu}-[(N_{\alpha\beta}-\varepsilon\delta_{\alpha\beta})(w_{,\beta})]_{,\alpha}-P,$$
so that
$$v=-(\nabla^2)^{-1}(h_{\alpha\beta\lambda\mu}w_{,\alpha\beta\lambda\mu}-[(N_{\alpha\beta}-\varepsilon\delta_{\alpha\beta})(w_{,\beta})]_{,\alpha}-P)$$

Thus $$Q=\{[-(\nabla^2)^{-1}(h_{\alpha\beta\lambda\mu}w_{,\alpha\beta\lambda\mu}
-[(N_{\alpha\beta}-\varepsilon\delta_{\alpha\beta})(w_{,\beta})]_{,\alpha}-P]_{,\rho}\}$$

so that
\begin{eqnarray}
&&\frac{1}{2\varepsilon} \int_\Omega \delta_{\alpha\beta} Q_\alpha Q_\beta\;dx\nonumber \\ &=&\frac{1}{2 \varepsilon} \int_\Omega\left[(-\nabla^2)^{-1}\left(h_{\alpha\beta\lambda\mu} w_{,\alpha\beta\lambda\mu}
-[(N_{\alpha\beta}-\varepsilon \delta_{\alpha\beta})w_{,\beta}]_{,\alpha}-P\right)\right] \nonumber \\ && \times \left(h_{\alpha\beta\lambda\mu} w_{,\alpha\beta\lambda\mu}
-[(N_{\alpha\beta}-\varepsilon \delta_{\alpha\beta})w_{,\beta}]_{,\alpha}-P\right) \;dx.
\end{eqnarray}

Thus
\begin{eqnarray}\label{q12}\tilde{J}^*(Q,N)&=&-\frac{1}{2}\int_\Omega h_{\alpha\beta\lambda\mu} w_{,\alpha\beta}w_{,\lambda\mu}\;dx
\nonumber \\ &&-\frac{1}{2}\int_\Omega (N_{\alpha\beta}-\varepsilon \delta_{\alpha\beta})w_{,\alpha}w_{,\beta}\;dx \nonumber \\ &&-\frac{1}{2 \varepsilon} \int_\Omega\left[(-\nabla^2)^{-1}\left(h_{\alpha\beta\lambda\mu} w_{,\alpha\beta\lambda\mu}
-[(N_{\alpha\beta}-\varepsilon \delta_{\alpha\beta})w_{,\beta}]_{,\alpha}-P\right)\right] \nonumber \\ && \times \left(h_{\alpha\beta\lambda\mu} w_{,\alpha\beta\lambda\mu}
-[(N_{\alpha\beta}-\varepsilon \delta_{\alpha\beta})w_{,\beta}]_{,\alpha}-P\right) \;dx\nonumber \\ &&-\frac{1}{2}\int_\Omega \overline{H}_{\alpha\beta\lambda\mu} N_{\alpha\beta}N_{\lambda\mu}\;dx
\nonumber \\ &=& J_3(w,N).
\end{eqnarray}

Moreover, from $\delta J(u_0)=\mathbf{0}$ we have
$$h_{\alpha\beta\lambda\mu}(w_0)_{,\alpha\beta\lambda\mu}-[((N_0)_{\alpha\beta}-\varepsilon \delta_{\alpha\beta})(w_0)_{,\beta}]_{,\alpha}
-\varepsilon \delta_{\alpha\beta}(w_0)_{\alpha\beta}-P=0, \text{ in } \Omega,$$
so that
$$\hat{w}_0\equiv w_0=(\nabla^2)^{-1}(h_{\alpha\beta\lambda\mu}(w_0)_{,\alpha\beta\lambda\mu}
-[((N_0)_{\alpha\beta}-\varepsilon \delta_{\alpha\beta})(w_0)_{,\beta}]_{,\alpha}-P)/\varepsilon$$
and therefore
\begin{eqnarray}&&-h_{\alpha\beta\lambda\mu}(w_0)_{,\alpha\beta\lambda\mu}+[((N_0)_{\alpha\beta}-\varepsilon \delta_{\alpha\beta})(w_0)_{,\beta}]_{,\alpha}
\nonumber \\ &&+h_{\alpha\beta\lambda\mu}(\hat{w}_0)_{,\alpha\beta\lambda\mu}-[((N_0)_{\alpha\beta}-\varepsilon  \delta_{\alpha\beta})(\hat{w}_0)_{,\beta}]_{,\alpha}\nonumber \\ &=&0.\end{eqnarray}
Also,
$$\overline{H}_{\alpha\beta\lambda\mu}(N_0)_{\lambda\mu}=\frac{1}{2}\left(\frac{(u_0)_{\alpha,\beta}+(u_0)_{\beta,\alpha}}{2}\right)
+\frac{1}{2}(w_0)_{,\alpha}(w_0)_{,\beta},$$
so that
$$\delta\left\{J_3(w_0,N_0)+\left\langle\frac{1}{2}\left(\frac{(u_0)_{\alpha,\beta}+(u_0)_{\beta,\alpha}}{2}\right),(N_0)_{\alpha\beta}\right\rangle_{L^2}
-\langle (u_0)_{,\alpha},P_\alpha \rangle_{L^2}\right\}=\mathbf{0}.$$

From these last results and from the last theorem, we may obtain

\begin{eqnarray}
J_3(w_0,N_0)&=& J(w_0)
\nonumber \\ &=& J^*(z_0,Q_0,N_0) \nonumber \\ &=& \tilde{J}^*(Q_0,N_0).\end{eqnarray}

From this, also from the last theorem and from (\ref{q12}), we finally get

\begin{eqnarray}
J(u_0)&=&\inf_{u \in U} J(u) \nonumber \\ &=& \sup_{(Q,N) \in \hat{Y}_2^* \times A^*}\tilde{J}^*(Q,N)
\nonumber \\ &=& \tilde{J}^*(Q_0,N_0) \nonumber \\ &=& J^*(z_0^*,Q_0,N_0) \nonumber \\ &=& J_3(w_0,N_0) \nonumber \\ &=& \sup_{(w, N) \in U \times A^*} J_3(w,N).
\end{eqnarray}

The proof is complete.

\end{proof}
\section{ A multi-duality principle for non-convex optimization}

Our final result is a multi-duality principle, which is summarized by the following theorem.

\begin{thm} Considering the notation and statements of the plate model addressed in the last sections, assuming a not relabeled finite dimensional approximate model, in a finite elements or finite differences context, let $J_1: U \times Y_1^* \times Y_2^* \rightarrow \mathbb{R}$ be a functional
where
\begin{eqnarray}
J_1(u,Q,N)&=& \frac{1}{2}\int_\Omega h_{\alpha\beta\lambda\mu} w_{,\alpha\beta}w_{,\lambda\mu}\;dx -\langle P,w\rangle_{L^2}\nonumber \\
&&+\frac{1}{2}\int_\Omega (\overline{-N_{\alpha\beta}^K}) Q_\alpha Q_\beta \;dx-\langle w_{,\alpha},Q_\alpha \rangle_{L^2} \nonumber \\ &&
+\frac{K}{2}\int_\Omega w_{,\alpha}w_{,\alpha}\;dx -\frac{1}{2}\int_\Omega \overline{H}_{\alpha\beta\lambda\mu} N_{\alpha\beta}N_{\lambda\mu}\;dx
\nonumber \\ &&-\langle N_{\alpha\beta,\beta}+P_\alpha,u_\alpha \rangle_{L^2},
\end{eqnarray}
and where $$\{\overline{-N_{\alpha\beta}^K}\}=\{-N_{\alpha\beta}+K \delta_{\alpha\beta}\}^{-1}.$$
Define also,
$$C^*=\left\{ N \in Y_1^*\;:\; \{-N_{\alpha\beta}+K \delta_{\alpha\beta}\} > \left\{ \frac{K}{2} \delta_{\alpha\beta} \right\}\right\},$$

$$B^*=\{ N \in Y_1^*\;:\; N_{\alpha\beta,\beta}+P_\alpha=0, \text{ in } \Omega\},$$
$$D^+=\{N \in Y_1^*\;:\; \hat{J}^*_1(Q,N)>0,\; \forall Q \in Y_2^*\text{ such that } Q \neq \mathbf{0}\},$$
$$D^-=\{N \in Y_1^*\;:\; \hat{J}^*_2(Q,N)<0,\; \forall Q \in Y_2^*\text{ such that } Q \neq \mathbf{0}\},$$
where $$\hat{J}^*_1(Q,N)=-F^*_K(Q)+\frac{1}{2}\int_\Omega (\overline{-N_{\alpha\beta}^K}) Q_\alpha Q_\beta \;dx,$$
and where $$F_K^*(Q)=\sup_{ u \in U} \left\{\langle w_{,\alpha}, Q_\alpha \rangle_{L^2}
- \frac{1}{2}\int_\Omega h_{\alpha\beta\lambda\mu}w_{,\alpha \beta} w_{,\lambda\mu}\;dx-\frac{K}{2} \int_\Omega w_{,\alpha}w_{,\alpha}\;dx\right\}.$$
Moreover $$\hat{J}^*_2(Q,N)=-F^*_K(Q)+H_K^*(Q,N),$$ where
$$H_K^*(Q,N)=\sup_{ u \in U}\left\{ \langle w_{,\alpha},Q_{,\alpha} \rangle_{L^2}-\frac{1}{2} \int_\Omega (-N_{\alpha \beta}+K \delta_{\alpha\beta})w_{,\alpha}w_{,\beta}\;dx \right\}.$$

We also define,
$$A^*_+=B^*\cap C^* \cap D^+$$
$$A^*_-=B^* \cap C^* \cap D^-,$$
and
$$E^*=B^* \cap C^*.$$

Let $u_0 \in U$ be such that $\delta J(u_0)=\mathbf{0}$ and define
$$(N_0)_{\alpha\beta}=H_{\alpha\beta\lambda\mu} \gamma_{\lambda\mu}(u_0),$$
and $$(Q_0)_\alpha = (-(N_0)_{\alpha\beta}+K \delta_{\alpha\beta})(w_0)_{,\beta}.$$

Under such hypotheses,
\begin{enumerate}
\item if $\delta^2 J(u_0)> \mathbf{0}$ and $N_0 \in E^*$, defining
$$J_2(u,Q)=\sup_{ N \in E^*} J_1(u,Q,N),$$
and $$\tilde{J}^*(Q)=\inf_{u \in B_{r_1}(u_0)} J_2(u,Q),$$
where $r_1>0$ is such that $$\delta^2J(u)> \mathbf{0}$$ in $B_{r_1}(u_0)$,
we have  $$J(u_0)= \tilde{J}^*(Q_0),$$ $$\delta \tilde{J}^*(Q_0)=\mathbf{0}$$ and if $K>0$ is sufficiently big, $$\delta^2 \tilde{J}^*(Q_0)\geq  \mathbf{0}$$ and there  exist $r,\; r_2>0$ such that
\begin{eqnarray}
J(u_0)&=& \inf_{u \in B_r(u_0)}J(u) \nonumber \\ &=& \inf_{Q \in B_{r_2}(Q_0)} \tilde{J}^*(Q) \nonumber \\ &=& \tilde{J}^*(Q_0).
\end{eqnarray}
\item
If $N_0 \in A^*_+$, defining
$$J_3(u,Q)=\sup_{ N\in A^*_+} J_1(u,Q,N),$$
and $$\tilde{J}^*_3(Q)=\inf_{u \in U} J_3(u,Q),$$ then $$\delta \tilde{J}_3^*(Q_0)=\mathbf{0},$$
$$\delta^2 \tilde{J}^*_3(Q_0)\geq  \mathbf{0},$$ and
\begin{eqnarray} J(u_0)&=& \inf_{u \in U} J(u)\nonumber \\ &=& \inf_{ Q \in Y_2^*} \tilde{J}^*_3(Q) \nonumber \\ &=& \tilde{J}^*_3(Q_0).
\end{eqnarray}

\item
If $\delta^2 J(u_0)< \mathbf{0}$ so that $N_0 \in A^*_-$, defining
$$\hat{J}^*(Q,N)=-\hat{F}^*_K(Q)+H_K^*(Q,N)
-\frac{1}{2}\int_\Omega \overline{H}_{\alpha\beta\lambda\mu} N_{\alpha\beta}N_{\lambda\mu}\;dx,$$
 where $$\hat{F}_K^*(Q)=\sup_{ u \in U} \left\{\langle w_{,\alpha}, Q_\alpha \rangle_{L^2}
- \frac{1}{2}\int_\Omega h_{\alpha\beta\lambda\mu}w_{,\alpha \beta} w_{,\lambda\mu}\;dx-\frac{K}{2} \int_\Omega w_{,\alpha}w_{,\alpha}\;dx
+\langle w,P \rangle_{L^2}\right\}.$$
 we have that $$\hat{J}^*(Q_0,N_0)=J(u_0),$$ $$\delta \{\hat{J}^*(Q_0,N_0)-\langle (N_0)_{\alpha\beta,\beta}+P_\alpha, (u_0)_\alpha \rangle_{L^2}\}=\mathbf{0},$$ and
 there exist $r,\;r_1,\;r_2>0$ such that
\begin{eqnarray} J(u_0)&=& \sup_{u \in B_r(u_0)} J(u)\nonumber \\ &=& \sup_{ Q \in B_{r_1}(Q_0)}\left\{\sup_{ N \in B_{r_2}(N_0)\cap E^*}\hat{J}^*(Q,N)\right\}
\nonumber \\ &=& \hat{J}^*(Q_0,N_0).
\end{eqnarray}
\end{enumerate}
\end{thm}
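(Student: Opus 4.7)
The plan is to exploit the saddle structure of $J_1$, matching its Euler--Lagrange equations at $(u_0,Q_0,N_0)$ with those of $J$ at $u_0$, and then deriving each of the three extremality statements by a second-variation bookkeeping tuned by the parameter $K$. The unifying observation is that for $N\in B^*$ the coupling term $-\langle N_{\alpha\beta,\beta}+P_\alpha,u_\alpha\rangle_{L^2}$ vanishes, and the stationarity condition $\partial J_1/\partial N_{\alpha\beta}=0$ reads
\[
\tfrac12(A^{-1}Q)_\alpha (A^{-1}Q)_\beta + \tfrac12(u_{\alpha,\beta}+u_{\beta,\alpha}) = \overline{H}_{\alpha\beta\lambda\mu}N_{\lambda\mu},
\]
with $A=-N+K\delta$, while $\partial J_1/\partial Q_\alpha = 0$ gives $Q_\alpha = A_{\alpha\beta}w_{,\beta}$. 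Substituting $w_{,\alpha} = (A^{-1}Q)_\alpha$ shows this coincides with the constitutive law $N_{\lambda\mu}=H_{\lambda\mu\alpha\beta}\gamma_{\alpha\beta}(u)$, so that the stationarity of $J_1$ reproduces the primal Euler--Lagrange equations. Combined with the first duality principle already proven, this yields $J_1(u_0,Q_0,N_0)=J(u_0)$ and thus provides the base identity from which all three cases follow.

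For part 1, I would first verify that for $N\in E^*$ the functional $J_1(u,Q,\cdot)$ admits a unique maximizer on $E^*$: the $\overline{H}NN$ term is strictly concave, and $(-N+K\delta)^{-1}$ is matrix-convex, but its contribution is controlled on $C^*$ by the bound $-N+K\delta > (K/2)\delta$, so for $K$ sufficiently large the composite Hessian in $N$ is negative definite. This gives a smooth implicit map $N=N(u,Q)$ on $E^*$ and a $C^2$ functional $J_2(u,Q)=J_1(u,Q,N(u,Q))$. I would then apply the implicit function theorem to $\partial J_2/\partial u = 0$ under the assumption $\delta^2 J(u_0)>\mathbf{0}$ to obtain a local minimizer $u(Q)$ on $B_{r_1}(u_0)$, so that $\tilde{J}^*(Q) = J_2(u(Q),Q)$. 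The stationarity $\delta\tilde{J}^*(Q_0)=\mathbf{0}$ follows from the chain rule together with the already-established $\partial J_1/\partial Q|_0 = 0$. For $\delta^2\tilde{J}^*(Q_0)\geq\mathbf{0}$, I would compute the reduced Hessian, observing that the $\overline{-N^K}$ quadratic contributes positively in $Q$ while the perturbation $-\frac{K}{2}w_{,\alpha}w_{,\alpha}$ is the only negative piece, and for $K$ large the dominant contribution from the biharmonic operator $h_{\alpha\beta\lambda\mu}D^*_{\lambda\mu}D_{\alpha\beta}$ combined with $\delta^2J(u_0)>\mathbf{0}$ absorbs it.

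For part 2, the membership $N_0\in A_+^*$ is precisely the global certificate needed to upgrade the local analysis of part 1: the condition $\hat J_1^*(Q,N)>0$ for all nonzero $Q$ means that $\frac12\int\overline{-N^K}QQ\,dx$ strictly dominates $F_K^*(Q)$, which makes the reduced functional $\tilde J_3^*$ globally convex in $Q$, so that its unique minimum is attained at $Q_0$ and coincides with $J(u_0)$. Part 3 is the mirror saddle version: when $\delta^2 J(u_0)<\mathbf{0}$ one replaces $F_K^*$ by $\hat F_K^*$, which differs only by including $\langle w,P\rangle_{L^2}$ in the supremum, and this recasts the original minimization as a local maximization over $u$. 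The condition $N_0\in D^-$, i.e.\ $\hat J_2^*(Q,N)<0$ for all nonzero $Q$, then supplies the concavity required so that supping separately over $Q\in B_{r_1}(Q_0)$ and $N\in B_{r_2}(N_0)\cap E^*$ recovers $J(u_0)$.

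The main technical obstacle will be the Hessian accounting in part 1, where the reduced Hessian $\delta^2\tilde{J}^*(Q_0)$ involves implicit derivatives through both $N(u,Q)$ and $u(Q)$; one must verify quantitatively that $K$ can be chosen independently of the chosen finite-dimensional approximation so that the matrix-convex term $(-N+K\delta)^{-1}$ does not swamp the positive contributions from $\overline{H}$ and $h$. This is also the point at which the finite-dimensional hypothesis is essential, since uniform bounds of this type are transparent in that setting but subtle in the continuous model.
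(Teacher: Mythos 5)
Your proposal follows essentially the same route as the paper's own proof: it matches the stationarity conditions of $J_1$ in $Q$ and $N$ with the primal constitutive law to get $J_1(u_0,Q_0,N_0)=J(u_0)$, uses the implicit function theorem and chain rule for $\delta\tilde J^*(Q_0)=\mathbf{0}$, establishes $\delta^2\tilde J^*(Q_0)\geq\mathbf{0}$ by showing the reduced Hessian in $Q$ is $\{\overline{(-N_0)^K_{\alpha\beta}}\}$ plus a correction that is small for large $K$, treats part 2 via convexity of $J_3$ certified by $N_0\in A^*_+$, and part 3 as the mirrored concave case. The proposal is consistent with the paper's argument at the same level of rigor.
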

\begin{proof}
From the assumption $N_0 \in E^*$ we have that
$$J_2(u_0,Q_0)=\sup_{ N \in E^*} J_1(u_0,Q_0,N)=J_1(u_0,Q_0,N_0),$$
where such a supremum is attained through the equation
$$\frac{\partial J_1(u_0,Q_0,N_0)}{\partial N}=\mathbf{0}.$$

Moreover, there exists $r>0,\; r_2>0$ such that $$J(u_0)=\inf_{ u \in B_r(u_0)}J(u),$$
and (we justify that the first infimum  $Q$ in this equation (\ref{a1}) is well defined in the next lines) \begin{eqnarray}\label{a1}J(u_0)&=&\inf_{u \in B_r(u_0)} \inf_{Q \in B_{r_2}(Q_0)} \sup_{N \in E^*} J_1(u,Q,N)
\nonumber \\ &=& \inf_{Q \in B_{r_2}(Q_0)}\inf_{u \in B_{r_1}(u_0)} \sup_{N \in E^*} J_1(u,Q,N) \nonumber \\ &=&
\inf_{ Q \in B_{r_2}(Q_0)} \tilde{J}^*(Q)\nonumber \\ &=& \tilde{J}^*(Q_0).\end{eqnarray}

Observe the concerning extremal in $Q$ is attained through the equation,
$$\frac{\partial J_1(u_0,Q_0,N_0)}{\partial Q}=\mathbf{0}.$$

Hence, from $$\frac{\partial J(u_0)}{\partial u}=\mathbf{0},$$ from the implicit function theorem and chain rule, we get
\begin{eqnarray}\mathbf{0}&=&\frac{\partial J(u_0)}{\partial u}\nonumber \\ &=& \frac{\partial J_1(u_0,Q_0,N_0)}{\partial u}
\nonumber \\ &&+\frac{\partial J_1(u_0,Q_0,N_0)}{\partial Q}\frac{\partial Q_0}{\partial u} \nonumber \\ &&
+ \frac{\partial J_1(u_0,Q_0,N_0)}{\partial N}\frac{\partial N_0}{\partial u} \nonumber \\ &=&\frac{\partial J_1(u_0,Q_0,N_0)}{\partial u}.
\end{eqnarray}
Therefore,
\begin{eqnarray}&&\frac{\partial \tilde{J}^*(Q_0)}{\partial Q}\nonumber \\ &=& \frac{\partial J_1(u_0,Q_0,N_0)}{\partial Q}
\nonumber \\ &&+\frac{\partial J_1(u_0,Q_0,N_0)}{\partial u}\frac{\partial u_0}{\partial Q} \nonumber \\ &&
+ \frac{\partial J_1(u_0,Q_0,N_0)}{\partial N}\frac{\partial N_0}{\partial Q} \nonumber \\ &=& \mathbf{0}.
\end{eqnarray}
From this we shall denote $$\delta \tilde{J}^*(Q_0)=\mathbf{0}.$$

Let us now show that the first infimum in $Q$ in (\ref{a1}) is well defined.

Recall again that, $$J_2(u_0,Q_0)=\sup_{ N \in E^*} J_1(u_0,Q_0,N),$$ where such a supremum is attained through the equation
$$\frac{\partial J_1(u_0,Q_0,N_0)}{\partial N}=\mathbf{0},$$
that is,
\begin{eqnarray}&&(Q_0)_\lambda \overline{(-N_0)_{\alpha \lambda}^K}\;\overline{(-N_0)_{\beta \mu}^K}(Q_0)_\mu
\nonumber \\ &&-\overline{H}_{\alpha\beta\lambda\mu} (N_0)_{\lambda \mu}+ \frac{(u_0)_{\alpha,\beta}+(u_0)_{\beta,\alpha}}{2} \nonumber
\\ &=& 0, \text{ in } \Omega.
\end{eqnarray}
Taking the variation of this last equation in $Q_\rho$ we have
\begin{eqnarray}&&-\overline{(-N_0)_{\alpha \rho}^K}\;\overline{(-N_0)_{\beta \mu}^K}(Q_0)_\mu \nonumber \\ &&
+(w_0)_\lambda \overline{(-N_0)_{\alpha \beta}^K}(w_0)_\mu \frac{\partial (N_0)_{\lambda \mu}}{\partial Q_\rho}
\nonumber \\ &&-\overline{H}_{\alpha\beta\lambda\mu} \frac{\partial (N_0)_{\lambda \mu}}{\partial Q_\rho} \nonumber
\\ &=& 0, \text{ in } \Omega,
\end{eqnarray}
that is
\begin{eqnarray}&&-\overline{(-N_0)_{\alpha \rho}^K}(w_0)_\beta \nonumber \\ &&
+(w_0)_\lambda \overline{(-N_0)_{\alpha \beta}^K}(w_0)_\mu \frac{\partial (N_0)_{\lambda \mu}}{\partial Q_\rho}
\nonumber \\ &&-\overline{H}_{\alpha\beta\lambda\mu} \frac{\partial (N_0)_{\lambda \mu}}{\partial Q_\rho} \nonumber
\\ &=& 0, \text{ in } \Omega,
\end{eqnarray}
so that
\begin{eqnarray}
&&\frac{\partial (N_0)_{\alpha\beta}}{\partial Q_\rho} \nonumber \\ &=&
\overline{\overline{H}_{\alpha\beta\lambda\mu}-(w_0)_\lambda \overline{(-N_0)_{\alpha\beta}^K} (w_0)_\mu}(\overline{(-N_0)_{\lambda \mu}^K}(w_0)_\rho).
\end{eqnarray}

Hence, if $K>0$ is sufficiently big, we obtain
\begin{eqnarray}
&&\left\{ \frac{\partial^2 J_2(u_0,Q_0)}{\partial Q_\alpha \partial Q_\beta} \right\}
\nonumber \\ &=& \left\{ \frac{\partial^2 J_1(u_0,Q_0,N_0)}{\partial Q_\alpha \partial Q_\beta} + \right.
\nonumber \\ && \left.+\frac{\partial^2 J_1(u_0,Q_0,N_0)}{\partial Q_\alpha \partial N_{\lambda \mu}} \frac{\partial (N_0)_{\lambda \mu}}{\partial Q_\beta}
\right\} \nonumber \\ &=& \{\overline{(-N_0)_{\alpha\beta}^K}\}+\overline{(-N_0)_{\alpha_\eta}^K} (w_0)_\rho \nonumber \\ &&
\times \left[\;\overline{\overline{H}_{\eta\rho\lambda\mu}\;-(w_0)_\lambda \;\overline{(-N_0)_{\eta\rho}^K}\; (w_0)_\mu}\;\right](\overline{(-N_0)_{\lambda \mu}^K}(w_0)_\beta)
\nonumber \\ &=& \{\overline{(-N_0)_{\alpha\beta}^K}\}+\mathcal{O}(1/K^2) \nonumber \\ &>& \mathbf{0}.
\end{eqnarray}

Therefore the first infimum in $Q$ in (\ref{a1}) is well defined.

Also, from (\ref{a1}) and the second order necessary condition for a local minimum, we obtain
$$\delta^2 \tilde{J}^*(Q_0)  \geq \mathbf{0}.$$

Assume now again $\delta J(u_0)=\mathbf{0}$ and $N_0 \in A^*_+.$

Recall that $$J_3(u,Q)=\sup_{ N \in A^*_+} J_1(u,Q,N).$$

Observe that if $N \in A^*_+$, by direct computation we may obtain
$$\delta^2_{uQ} J_1(u,Q,N)> \mathbf{0}.$$

Therefore $J_3(u,Q)$ is convex since is the supremum of a family of convex functionals.

Similarly as above we may obtain $$\delta J_3(u_0,Q_0)=\mathbf{0},$$ and
$$J_3(u_0,Q_0)=J(u_0)$$ so that
\begin{eqnarray}
J(u_0)&=&J_3(u_0,Q_0) \nonumber \\ &=& \inf_{(u,Q) \in U \times Y_2^*} J_3(u,Q)
\nonumber \\ &\leq& \inf_{Q \in Y_2^*} J_3(u,Q)\nonumber \\ &=& J(u),\; \forall u \in U.
\end{eqnarray}

Moreover, \begin{eqnarray} \tilde{J}^*_3(Q_0) &=& \inf_{ u \in U} J_3(u,Q_0) \nonumber \\ &=& J_3(u_0,Q_0)
 \nonumber \\ &\leq& J_3(u,Q), \forall u \in U,\; Q \in Y_2^*. \end{eqnarray}
 Hence,
 $$J(u_0)=\tilde{J}^*_3(Q_0)\leq \inf_{u \in U} J_3(u,Q)= \tilde{J}^*_3(Q),\; \forall Q \in Y_2^*.$$

 From these last results we may write,

 \begin{eqnarray}
 J(u_0)&=& \inf_{u \in U} J(u) \nonumber \\ &=&
 \inf_{Q \in Y_2^*} \tilde{J}^*_3(Q) \nonumber \\ &=& \tilde{J}^*_3(Q_0).
 \end{eqnarray}

 From this, similarly as above, we may obtain $$\delta^2 \tilde{J}^*_3(Q_0) \geq \mathbf{0}.$$

Finally, suppose now $\delta^2J(u_0)<\mathbf{0},$ so that $N_0 \in A^*_-.$

From this we obtain $$\frac{ \partial^2 \hat{J}^*(Q_0,N_0)}{\partial (Q_{\alpha,\alpha})^2}< \mathbf{0}$$ where, as previously indicated,
 $$\hat{J}^*(Q,N)=-\hat{F}^*_K(Q)+H_K^*(Q,N)
-\frac{1}{2}\int_\Omega \overline{H}_{\alpha\beta\lambda\mu} N_{\alpha\beta}N_{\lambda\mu}\;dx.$$

Here, $$H_K^*(Q,N)=\sup_{ u \in U}\left\{ \langle w_{,\alpha},Q_{\alpha} \rangle_{L^2}-\frac{1}{2} \int_\Omega (-N_{\alpha \beta}+K \delta_{\alpha\beta})w_{,\alpha}w_{,\beta}\;dx \right\}.$$

Denoting, \begin{eqnarray}
\hat{J}(u,N)&=& \frac{1}{2}\int_\Omega h_{\alpha\beta\lambda\mu} w_{,\alpha\beta} w_{,\lambda\mu}\;dx \nonumber \\
&&+ \frac{1}{2} \int_\Omega N_{\alpha \beta}w_{,\alpha}w_{,\beta}\;dx-\frac{1}{2}\int_\Omega \overline{H}_{\alpha\beta\lambda\mu}N_{\alpha\beta}N_{\lambda\mu}\;dx
\nonumber \\&& -\langle N_{\alpha\beta,\beta}+P_\alpha,u_\alpha \rangle_{L^2} ,\end{eqnarray}
also from $N_0 \in A^*_-$ and from $$\hat{J}^*(Q_0,N_0)=J(u_0),$$ $$\delta \{\hat{J}^*(Q_0,N_0)-\langle (N_0)_{\alpha\beta,\beta}+P_\alpha, (u_0)_\alpha \rangle_{L^2}\}=\mathbf{0},$$ (the proofs of such results are very similar to those of the corresponding cases developed above), there exist $r,r_1,r_2>0$ such that for $N \in A^*_- \cap B_{r_2}(N_0)$, we have

\begin{eqnarray} \sup_{u \in B_r(u_0)} \hat{J}(u,N)=\sup_{Q_\in B_{r_1}(Q_0)} \hat{J}^*(Q,N), \end{eqnarray}
and
\begin{eqnarray}
J(u_0)&=& \sup_{ u \in B_r(u_0)} J(u) \nonumber \\ &=& \sup_{ u \in B_r(u_0)}\left\{ \sup_{ N \in B_{r_2}(N_0) \cap E^*} \hat{J}(u,N) \right\}\nonumber \\ &=&
\sup_{ Q \in B_{r_1}(Q_0)}\left\{ \sup_{N \in B_{r_2}(N_0)\cap E^*} \hat{J}^*(Q,N)\right\} \nonumber \\ &=& \hat{J}^*(Q_0,N_0). \end{eqnarray}

The proof is complete.
\end{proof}

\section{Conclusion}
In this article we have developed a new primal dual variational formulation and a multi-duality principle applied to a non-linear model of plates.

About the primal dual formulation, we emphasize such a formulation is concave so that it is very interesting from a numerical analysis point of view.

Finally, the results here presented  may  be also developed in a similar fashion for a large class of problems,
including non-linear models in elasticity and other non-linear models of plates and shells.


\end{document}